\newtheorem{theorem}{Theorem}[section]
\newtheorem{lemma}[theorem]{Lemma}
\newtheorem{corollary}[theorem]{Corollary}
\newtheorem{proposition}[theorem]{Proposition}
\theoremstyle{remark}
\newtheorem{remark}{Remark}[section]
\newcommand{\Rmnum}[1]{\expandafter\@slowromancap\romannumeral #1@}
\def\ri{\mathrm i}
\def\rb{\mathbb R}
\def\zb{\mathbb Z}
\def\cb{{\mathbb C}}
\def\rrw{\rightarrow}
\numberwithin{equation}{section}
\begin{document}
\title{On the distribution of rank statistic for strongly concave compositions}
\author{Nian Hong Zhou}
\address{School of Mathematical Sciences,  East China Normal University,
500 Dongchuan Road, Shanghai 200241, PR China}
\email{nianhongzhou@outlook.com}
\keywords{concave composition, partitions, rank, asymptotics}
\subjclass[2010]{Primary: 11P82; Secondary: 05A16, 05A17}

\thanks{This research was supported by the National Science Foundation of China (Grant No. 11571114).}

\begin{abstract}
A strongly concave composition of $n$ is an integer partition with strictly
decreasing and increasing parts. In this paper we give a uniform asymptotic formula for the rank statistic of a strongly concave composition introduced by
Andrews, Rhoades and Zwegers ['Modularity of the concave composition generating function', Algebra \& Number Theory 7 (2013), no. 9, 2103--2139].
\end{abstract}

\maketitle

\section{Introduction}
A partition of a positive integer $n$ is a sequence of non-increasing positive integers whose sum equals $n$.
Let $p(n)$ be the number of integer partitions of $n$. To explain Ramanujan's famous partition
congruences with modulus $5$, $7$ and $11$, the rank and crank statistic for integer partitions was introduced by Dyson \cite{MR3077150}, Andrews  and Garvan \cite{MR929094, MR920146}.
Let $N(m, n)$ and $M(m,n)$ be the number of partitions of $n$ with rank $m$ and crank $m$, respectively. It is well known that
$$
\sum_{n\ge 0}N(m,n)q^n=\frac{1}{(q;q)_{\infty}}\sum_{n\ge 1}(-1)^{n-1}q^{n(3n-1)/2+|m|n}(1-q^n),
$$
and
$$
\sum_{n\ge 0}M(m,n)q^n=\frac{1}{(q;q)_{\infty}}\sum_{n\ge 1}(-1)^{n-1}q^{n(n-1)/2+|m|n}(1-q^n),
$$
where $(a;q)_{\infty}=\prod_{j\ge 0}(1-aq^j)$ for any $a\in\cb$ and $|q|<1$.

In \cite{MR1001259}, Dyson gave the following asymptotic formulae conjecture for the crank statistic for integer partitions:
\begin{equation}\label{dysc}
M\left( m,n \right)\sim \frac{\pi}{4\sqrt{6n}} {\rm sech}^2 \left( \frac{\pi m}{2\sqrt{6n}}   \right) p(n), n\rrw +\infty.
\end{equation}
Bringmann and Dousse \cite{MR3451872} proved \eqref{dysc} holds for all $|m|\le (\sqrt{n}\log n)/(\pi\sqrt{6})$.
Interestingly, Dousse and Mertens \cite{MR3337213} proved \eqref{dysc} also holds for $N(m,n)$. For more results on asymptotics
for rank and crank statistics for integer partitions, see \cite{MR3210725, MR3279269, MR3103192, MR3565363}.

A concave composition $\lambda$ is a nonnegative integer sequence $\{a_r\}_{r=1}^s$ of the form
$$a_1\ge a_2\ge \dots\ge a_{k-1}>a_k<a_{k+1}\le \dots\le a_{s-1}\le a_s,$$
for some $s\in\zb_+$, where $a_k$ is called the central part of
$\lambda$. If all the $"\ge"$ and $"\le"$ are replaced by $">"$ and $"<"$, respectively, we refer to a strongly
concave composition. The rank of $\lambda$ above is defined as ${\rm rk}(\lambda):=s-2k+1$, which analogs the rank statistic for integer partitions and measures the position
of the central part.

Let $\mathcal{V}(n)$ and $\mathcal{V}_d(n)$ be the set of all concave composition and all strongly concave composition, respectively, of nonnegative integer $n$. Also, let
$V(n)=\#\mathcal{V}(n)$ and $V_d(n)=\#\mathcal{V}_d(n)$ be the number of concave composition and strongly concave composition, respectively, of nonnegative integer $n$.  Andrews \cite{MR3048655} proved that
$$
v(q):=\sum_{n\ge 0}V(n)q^n=\sum_{n\ge 0}\frac{q^n}{(q^{n+1};q)_{\infty}^2}
$$
and
$$
v_d(q):=\sum_{n\ge 0}V_d(n)q^n=\sum_{n\ge 0}(-q^{n+1};q)_{\infty}^2q^n.
$$

Andrews, Rhoades and Zwegers \cite{MR3152010} proved that $v(q)$ is mixed modular form. More precise, they established the following modularity properties.
\begin{theorem}
Let $q=e^{2\pi\ri \tau}$ with $\tau\in\cb$ and $\Im(\tau)>0$. Define $f(\tau)=q(q;q)_{\infty}^3v(q)$ and
$$\hat{f}(\tau)=f(\tau)-\frac{\ri}{2}\eta(\tau)^3\int_{-\bar{\tau}}^{\ri\infty}\frac{\eta(z)^3}{(-\ri(z+\tau))^{1/2}}\,dz+\frac{\sqrt{3}}{2\pi\ri}\eta(\tau)\int_{-\bar{\tau}}^{\ri\infty}\frac{\eta(z)}{(-\ri(z+\tau))^{3/2}}\,dz,$$
where the Dedekind $\eta$-function is given by $\eta(\tau)=q^{1/24}(q;q)_{\infty}$. The function $\hat{f}$ transforms as a modular form of weight $2$ for ${\rm SL}_2(\zb)$.
\end{theorem}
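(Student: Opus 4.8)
The plan is to realise $f$ as a \emph{mixed mock modular form} of weight $2$, i.e.\ to produce two holomorphic $q$-series $\phi_1,\phi_2$ on the upper half-plane with
$$f(\tau)=\eta(\tau)^3\phi_1(\tau)+\eta(\tau)\phi_2(\tau),$$
where $\phi_1$ is mock modular of weight $\tfrac12$ with shadow a constant multiple of $\eta^3$, and $\phi_2$ is mock modular of weight $\tfrac32$ with shadow a constant multiple of $\eta$. Granting this, the two Eichler integrals appearing in $\hat f$ are exactly the non-holomorphic completions attached to these shadows: their kernels $(-\ri(z+\tau))^{-1/2}$ and $(-\ri(z+\tau))^{-3/2}$ are the period kernels of weight $\tfrac12$ and weight $\tfrac32$, matching the weights of $\phi_1$ and $\phi_2$. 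Thus $\hat f=\eta^3\hat\phi_1+\eta\hat\phi_2$ with each $\hat\phi_j$ a harmonic Maass form. Since $\eta^3$ is modular of weight $\tfrac32$ and $\hat\phi_1$ transforms with weight $\tfrac12$ and the inverse multiplier, the product $\eta^3\hat\phi_1$ transforms with weight $2$ and trivial multiplier on $\mathrm{SL}_2(\zb)$, and likewise for $\eta\hat\phi_2$; the sum then obeys $\hat f(\gamma\tau)=(c\tau+d)^2\hat f(\tau)$, which is the assertion.

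The first and principal task is the underlying $q$-series identity. Using $(q^{n+1};q)_\infty=(q;q)_\infty/(q;q)_n$ I would first rewrite
$$v(q)=\frac{1}{(q;q)_\infty^2}\sum_{n\ge0}q^n(q;q)_n^2,\qquad f(\tau)=q\,(q;q)_\infty\sum_{n\ge0}q^n(q;q)_n^2,$$
and then apply a Bailey-pair manipulation to the inner sum so as to separate it into the two pieces weighted by $\eta^3$ and $\eta$. I expect the sum to reduce to a combination of an Appell--Lerch sum $\mu$ in the sense of Zwegers and an ordinary theta quotient, with the Appell--Lerch part carrying the mock behaviour. This is the combinatorial heart of the argument and where I anticipate the main obstacle: choosing a Bailey pair so that the decomposition aligns with the two unary theta shadows, and controlling the normalising constants, in particular the factor $\sqrt3/(2\pi)$ attached to the $\eta$-shadow.

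With the decomposition in hand the modular completion is built on standard machinery. For the Appell--Lerch piece I would invoke Zwegers' result that $\mu$ admits a completion $\hat\mu$ transforming like a weight-$\tfrac12$ Jacobi form, whose non-holomorphic correction is a period integral of a unary theta series; specialising the elliptic variables to the relevant torsion points produces precisely the two integral kernels above. The shadows are then identified by applying the lowering operators $\xi_{1/2}$ and $\xi_{3/2}$ and recognising the resulting theta series as $\eta^3$ and $\eta$ through Jacobi's identity $(q;q)_\infty^3=\sum_{n\ge0}(-1)^n(2n+1)q^{n(n+1)/2}$ and Euler's pentagonal number theorem. Finally I would verify the automorphy on the generators of $\mathrm{SL}_2(\zb)$: invariance under $\tau\mapsto\tau+1$ is immediate since $f$ is a power series in $q$, while for $\tau\mapsto-1/\tau$ I would combine $\eta(-1/\tau)=\sqrt{-\ri\tau}\,\eta(\tau)$ with the weight-$\tfrac12$ and weight-$\tfrac32$ transformations of $\hat\phi_1,\hat\phi_2$, the nontrivial multipliers of the two mock pieces cancelling against those of $\eta^3$ and $\eta$ to leave the trivial weight-$2$ multiplier.
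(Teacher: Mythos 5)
You should first be aware that the paper you are being compared against contains no proof of this statement at all: it is quoted as background from Andrews--Rhoades--Zwegers \cite{MR3152010}, and the present paper's own arguments concern only $v_d$ and $V_d(m,n)$. So the relevant benchmark is the proof in \cite{MR3152010}, and at the level of strategy your outline does match it: that proof first establishes an exact $q$-series identity decomposing $f$ into products of $\eta$-powers with explicit Appell--Lerch/partial-theta series (proved with Bailey-pair technology), and then applies Zwegers-type completion results, with shadows proportional to $\eta^3$ and $\eta$, exactly the mixed-mock framework you describe. Your bookkeeping of weights is also right: the kernels $(-\ri(z+\tau))^{-1/2}$ and $(-\ri(z+\tau))^{-3/2}$ are the completion kernels for mock pieces of weight $\tfrac12$ (shadow of weight $\tfrac32$, i.e.\ $\eta^3$) and weight $\tfrac32$ (shadow $\eta$), and the products $\eta^3\hat\phi_1$, $\eta\hat\phi_2$ would indeed have weight $2$.

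The genuine gap is that the entire content of the theorem lives in the step you explicitly defer. The decomposition $f=\eta^3\phi_1+\eta\phi_2$ with $\phi_1,\phi_2$ mock of the stated weights and shadows is asserted (``I would apply a Bailey-pair manipulation\dots I expect the sum to reduce''), not established, and everything downstream is conditional on it: without the explicit identity you cannot identify the shadows, you cannot derive the precise constants $-\tfrac{\ri}{2}$ and $\tfrac{\sqrt{3}}{2\pi\ri}$ in $\hat f$ (these normalizations are exactly what make the \emph{stated} completion, rather than some other one, transform correctly), and you cannot carry out the multiplier cancellation you invoke, since that requires knowing that $\phi_1,\phi_2$ transform with precisely the conjugate $\eta$-multipliers --- which is equivalent to the identification with specializations of Zwegers' $\hat\mu$ that you have not performed. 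You should also allow for the possibility that the identity contains an additional honestly modular weight-$2$ summand beyond the clean two-term expression you posit; this is harmless for the conclusion but means your ansatz is not literally the identity one proves. A smaller inaccuracy: invariance of $\hat f$ under $\tau\mapsto\tau+1$ is not ``immediate since $f$ is a power series in $q$''; that argument covers only $f$ itself, while each period integral, after the substitution $z\mapsto z-1$, acquires a phase ($e^{-\pi\ri/4}$ from $\eta^3$, $e^{-\pi\ri/12}$ from $\eta$) that must be checked to cancel against the prefactors $\eta(\tau)^3$ and $\eta(\tau)$ --- an easy computation, but part of the proof. As it stands, your proposal is a correct plan whose principal step is missing.
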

For $v_d(q)$, Andrews \cite{MR3048655} proved that
$$
v_d(q)=2(-q;q)_{\infty}^2\sum_{n\ge 0}\left(\frac{-12}{n}\right)q^{\frac{n^2-1}{24}}-\sum_{n\ge 0}(-1)^nq^{\frac{n(n+1)}{2}},
$$
where $(\frac{\cdot}{\cdot})$ is the Kronecker symbol, that is, $v_d(q)+\sum_{n\ge 0}(-1)^nq^{\frac{n(n+1)}{2}}$ is essentially a modular function times a false theta function.
And so we are not expect that the properties of $V_d(n)$ as good as $V(n)$.
For example, \cite{MR3152010} point out that it is possible to obtain an asymptotic with a polynomial error for $V(n)$
by use a circle method of Bringmann and Mahlburg \cite{MR2823873}, but for $V_d(n)$, we can't seem to establish such asymptotic by existing approach.
Nevertheless, \cite[Theorem 1.5]{MR3152010} gave the following asymptotic expansion\footnote{It is need to note that the leading coefficient of the asymptotic expansion \eqref{asy1} is $2^{-1/4}3^{-5/4}$ rather that $2\cdot 2^{-1/4}3^{-5/4}$  in \cite[Theorem 1.5]{MR3152010}.}
\begin{equation}\label{asy1}
V_d(N)\sim 2^{-1/4}3^{-5/4}N^{-3/4}e^{2\pi\sqrt{\frac{N}{6}}}\left(1+\sum_{n\ge 1}c_nN^{-n/2}\right),
\end{equation}
for $N\rrw +\infty$, where $c_n\in\rb, n\in\zb_+$ is some computable constant.

Let $V_d(m,n)$ be the numbers of strongly concave compositions of $n$ with rank equal to $m$.
Andrews, Rhoades and Zwegers \cite{MR3152010} proved that
\footnote{We correct some sign error in \cite{MR3152010}, where in \eqref{eq0}, it is
$(-x;q)_{\infty}(-x^{-1}q;q)_{\infty}\sum_{n\ge 0}\left(\frac{-12}{n}\right)\dots$  rather than $ (x;q)_{\infty}(x^{-1}q;q)_{\infty}\sum_{n\ge 0}\left(\frac{12}{n}\right)\dots$.}
\begin{align}\label{eq0}
\sum_{n\ge 0}\sum_{m\in\zb}V_d(m,n)x^mq^n=&-\sum_{n\ge 0}(-1)^nq^{\frac{n(n+1)}{2}}x^{2n+1}\nonumber\\
&+(-x;q)_{\infty}(-x^{-1}q;q)_{\infty}\sum_{n\ge 0}\left(\frac{-12}{n}\right)x^{\frac{n-1}{2}}q^{\frac{n^2-1}{24}}.
\end{align}
In this paper we investigate the asymptotics for $V_d(m,n)$ as $n$ tends to infinity with arbitrary $m$, which is motivated by the questions in \cite[pp. 2108--2109]{MR3152010}
for the distribution of concave composition.\newline

The first result of this paper as follows.
\begin{proposition}\label{pr1}Let $p(n)$ be the number of integer partitions of nonnative integer $n$ and let $p(-\ell)=0$ for $\ell\in\zb_+$. Then for $N,\ell\in\zb$ we have
\begin{equation}\label{eq1}
V_d\left(\ell,N+\frac{|\ell|(|\ell|+1)}{2}\right)=\sum_{n\ge 0}\left(\frac{-3}{2n+1}\right)p\left(N-\frac{2n(n+1)}{3}-n|\ell|\right).
\end{equation}
In particular,  for $m,n\in\zb$ with $0\le n<\frac{|m|(|m|+5)}{2}+4$,
\begin{equation}\label{eq11}
V_d(m,n)=p\left(n-\frac{|m|(|m|+1)}{2}\right).
\end{equation}
\end{proposition}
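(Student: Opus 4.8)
The plan is to read off the coefficient of $x^\ell$ in \eqref{eq0}, clear the factor $(q;q)_\infty$ by the Jacobi triple product, and recognise the leftover identity as a shifted form of Euler's pentagonal number theorem.

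I would first reduce to $\ell\ge 0$. Reversing the sequence of a strongly concave composition $a_1>\cdots>a_{k-1}>a_k<a_{k+1}<\cdots<a_s$ sends its rank $s-2k+1$ to $-(s-2k+1)$ and preserves the size, so $V_d(m,n)=V_d(-m,n)$; since the right side of \eqref{eq1} depends on $\ell$ only through $|\ell|$, it suffices to treat $\ell=|\ell|\ge0$. Put $G_\ell(q)=\sum_n V_d(\ell,n)q^n=[x^\ell](\text{RHS of \eqref{eq0}})$. Multiplying \eqref{eq0} by $(q;q)_\infty$ and using $(q;q)_\infty(-x;q)_\infty(-x^{-1}q;q)_\infty=\sum_{j\in\zb}x^jq^{j(j-1)/2}$ collapses the infinite products; extracting $[x^\ell]$ and completing the square in the resulting exponent gives
\[(q;q)_\infty G_\ell(q)=\widetilde P_\ell(q)-\varepsilon_\ell(q;q)_\infty,\qquad \widetilde P_\ell(q)=\sum_{\nu\ge1}\left(\frac{-12}{\nu}\right)q^{\frac{(2\nu-3\ell)^2+3\ell^2-4}{24}},\]
where $\varepsilon_\ell=(-1)^{n_0}q^{n_0(n_0+1)/2}$ when $\ell=2n_0+1$ is odd (this is the contribution of the partial theta series in \eqref{eq0}) and $\varepsilon_\ell=0$ when $\ell$ is even.

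Next I would rewrite the target. Using $(q;q)_\infty^{-1}=\sum_{n}p(n)q^n$ and completing the square in the exponent $\binom{\ell+1}{2}+\frac{2k(k+1)}{3}+k\ell$, the assertion \eqref{eq1} is equivalent to $(q;q)_\infty G_\ell=\widetilde T_\ell$ with $\widetilde T_\ell(q)=\sum_{k\ge0}\left(\frac{-3}{2k+1}\right)q^{\frac{(4k+3\ell+2)^2+3\ell^2-4}{24}}$. Comparing the two expressions, the whole proposition reduces to the single theta identity $\widetilde P_\ell-\widetilde T_\ell=\varepsilon_\ell(q;q)_\infty$. The crucial structural feature is that every exponent occurring here has the shape $\frac{u^2+3\ell^2-4}{24}$, so it depends on its index only through $u^2$.

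The heart of the proof, and the step I expect to be the main obstacle, is to recombine the four one-sided character sums into genuine (bilateral) theta functions. Splitting $\widetilde P_\ell$ according to $\nu\equiv\pm1\pmod6$ and $\widetilde T_\ell$ according to $2k+1\equiv\pm1\pmod6$ turns all the Kronecker symbols into signs $\pm1$; then, exploiting $u^2=(-u)^2$, the negative-index tail of one sum reproduces the terms of another, and the four families assemble into $\widetilde P_\ell-\widetilde T_\ell=\Xi_1-\Xi_2$, where $\Xi_i=\sum_{m\in\zb}q^{\frac{(12m+a_i)^2+3\ell^2-4}{24}}$ with $a_1=2-3\ell$ and $a_2=2+3\ell$. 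If $\ell$ is even, the reindexing $m\mapsto m+\ell/2$ identifies $\Xi_1$ with $\Xi_2$, so the difference is $0=\varepsilon_\ell(q;q)_\infty$. If $\ell$ is odd, then $\{a_1,a_2\}\equiv\{5,11\}\pmod{12}$, so only the residues $\pm1\pmod6$ survive and $\Xi_1-\Xi_2=\pm q^{\frac{3\ell^2-4}{24}}\sum_{r\in\zb}(-1)^rq^{(6r-1)^2/24}$; since the pentagonal number theorem is exactly $(q;q)_\infty=q^{-1/24}\sum_{r}(-1)^rq^{(6r-1)^2/24}$, this equals $\pm q^{(\ell^2-1)/8}(q;q)_\infty$, and following the residue of $3\ell\pmod{12}$ fixes the sign as $(-1)^{n_0}$, i.e. exactly $\varepsilon_\ell(q;q)_\infty$. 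The delicate part is precisely this bookkeeping of residues modulo $12$ together with the sign.

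Finally I would deduce \eqref{eq11} from \eqref{eq1}. Write the sum in \eqref{eq1} with dummy index $t\ge0$ and apply it with $\ell=m$ and $N=n-\frac{|m|(|m|+1)}{2}$. The $t=1$ term has character $0$, so after the $t=0$ term $p(N)$ the first possibly nonzero correction is the $t=2$ term $-p\!\left(N-4-2|m|\right)$, and every later term has strictly smaller argument. The hypothesis $0\le n<\frac{|m|(|m|+5)}{2}+4$ is precisely the inequality $N-4-2|m|<0$, which annihilates this and all subsequent terms, leaving $V_d(m,n)=p(N)=p\!\left(n-\frac{|m|(|m|+1)}{2}\right)$.
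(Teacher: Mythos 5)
Your proposal is correct --- I checked the key computations: with $\nu=2k+1$ the exponent in $\widetilde P_\ell$ is indeed $\frac{(2\nu-3\ell)^2+3\ell^2-4}{24}$, the four half-lines $u\equiv 2-3\ell,\,10-3\ell,\,2+3\ell,\,10+3\ell \pmod{12}$ glue under $u\mapsto -u$ with neither gap nor overlap (the endpoints differ by exactly $12$ within each residue class) into the bilateral sums $\Xi_1,\Xi_2$, and the sign does come out as $(-1)^{n_0}$ (e.g.\ $\ell\equiv 1\bmod 4$ gives $a_1\equiv 11$, $a_2\equiv 5\bmod{12}$, hence the $+$ sign) --- but it takes a genuinely longer route than the paper at the crucial step. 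The paper likewise multiplies \eqref{eq0} by $(q;q)_\infty$ and applies the Jacobi triple product, but then extracts the coefficient of $x^{-r}$ with $r\ge 0$ rather than of $x^{+\ell}$: since the partial theta term $-\sum_{n\ge 0}(-1)^n q^{n(n+1)/2}x^{2n+1}$ contains only positive odd powers of $x$, it contributes nothing to those coefficients, and \eqref{eq1} falls out immediately from $1/(q;q)_\infty=\sum_{k\ge 0}p(k)q^k$ after the index shift $j=n+r$, with the symmetry $V_d(m,n)=V_d(-m,n)$ invoked at the end exactly as you invoke it at the start (you justify it by reversal, the paper merely asserts it). Your choice of extracting non-negative powers forces the partial theta to survive as $\varepsilon_\ell$, and your identity $\widetilde P_\ell-\widetilde T_\ell=\varepsilon_\ell (q;q)_\infty$ is precisely the statement that the right-hand side of \eqref{eq0} is compatible with the symmetry $x\mapsto x^{-1}$; so what your argument buys, at the cost of the mod-$12$ dissection and an appeal to the pentagonal number theorem, is an independent consistency check of the sign-corrected formula \eqref{eq0} --- not worthless, given the footnoted sign errors in the source --- whereas the paper's choice buys a proof of about five lines. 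Your deduction of \eqref{eq11} coincides with the paper's: the $t=1$ term is killed by $\left(\frac{-3}{3}\right)=0$, the $t=2$ term is $-p\left(N-4-2|m|\right)$, and the hypothesis $0\le n<\frac{|m|(|m|+5)}{2}+4$ is exactly $N<2|m|+4$, which annihilates it and all later terms.
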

From the above Proposition \ref{pr1} we prove the following uniform asymptotics for $V_d(m,n)$ as $n\rrw +\infty$.
\begin{theorem}\label{th1}We have uniformly for all $\ell\in\zb$ and $N\rrw +\infty$,
\begin{equation}\label{eqm0}
V_d\left(\ell,N+\frac{|\ell|(|\ell|+1)}{2}\right)= p(N)F\left(\frac{\pi|\ell|}{\sqrt{6N}}\right)\left(1+O\left(N^{-1/10}\right)\right),
\end{equation}
where the implied constant is absolute and
$$F(\alpha)=\frac{1+e^{-\alpha}}{1+e^{-\alpha}+e^{-2\alpha}}.$$
In particular, if integer $m=o\left(N^{3/8}\right)$ then
\begin{equation}\label{nnn}
\frac{V_d\left(m,N\right)}{V_d(N)}\sim \frac{1}{(24N)^{1/4}}\exp\left(-\frac{\pi m^2}{\sqrt{24N}}\right).
\end{equation}
\end{theorem}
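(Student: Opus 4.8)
The plan is to start from the exact finite sum of Proposition~\ref{pr1}, namely
\[
V_d\Big(\ell,N+\tfrac{|\ell|(|\ell|+1)}{2}\Big)=\sum_{n\ge0}\left(\frac{-3}{2n+1}\right)p\Big(N-\tfrac{2n(n+1)}{3}-n|\ell|\Big),
\]
and to compare each summand with $p(N)$. The first point is that $F$ is itself the Dirichlet series of the Kronecker symbol: since $\left(\frac{-3}{2n+1}\right)$ equals $+1,0,-1$ according as $n\equiv0,1,2\pmod3$, splitting into the two nonzero residue classes gives geometric series and hence, for $\alpha\ge0$,
\[
\sum_{n\ge0}\left(\frac{-3}{2n+1}\right)e^{-n\alpha}=\frac{1-e^{-2\alpha}}{1-e^{-3\alpha}}=\frac{1+e^{-\alpha}}{1+e^{-\alpha}+e^{-2\alpha}}=F(\alpha).
\]
Thus, writing $\alpha=\pi|\ell|/\sqrt{6N}$ and $D_n=\tfrac{2n(n+1)}{3}+n|\ell|$, the formula \eqref{eqm0} will follow once I show that inside the sum the ratio $p(N-D_n)/p(N)$ may be replaced by $e^{-n\alpha}$ at the cost of a uniform relative error $O(N^{-1/10})$.

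To this end I would insert the Hardy--Ramanujan estimate $p(M)=\frac{1}{4\sqrt3\,M}e^{C\sqrt M}\big(1+O(M^{-1/2})\big)$ with $C=\pi\sqrt{2/3}$. On the range $D_n\le N/2$, expanding $\sqrt{N-D_n}-\sqrt N=-\tfrac{D_n}{2\sqrt N}+O(D_n^2/N^{3/2})$ and using $\tfrac{C}{2\sqrt N}=\pi/\sqrt{6N}$ yields
\[
\frac{p(N-D_n)}{p(N)}=e^{-n\alpha}\,e^{-\delta_n}\big(1+\rho_n\big),\qquad \delta_n=\frac{\pi}{\sqrt{6N}}\cdot\frac{2n(n+1)}{3},
\]
with $\rho_n=O\big(\tfrac{D_n}{N}+\tfrac{D_n^2}{N^{3/2}}+(N-D_n)^{-1/2}\big)$. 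The analysis then splits into three pieces. For the tail $D_n>N/2$ one bounds $p(N-D_n)\le p(\lfloor N/2\rfloor)$ and uses that $p(\lfloor N/2\rfloor)/p(N)$ is exponentially small, so that the (at most $N$) such terms contribute $o(p(N)N^{-1/10})$. The contribution of the $\rho_n$ is controlled by the Gaussian/geometric weight $e^{-n\alpha-\delta_n}$ together with the elementary bounds $\sum_n n^2 e^{-n\alpha-\delta_n}\ll\min(\alpha^{-3},N^{3/4})$ and $\sum_n n^4 e^{-\delta_n}\ll N^{5/4}$; taking the appropriate minimum in each case (according to whether the geometric or the Gaussian factor dominates) shows this contribution is $O(N^{-1/4})$ uniformly in $\ell$. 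The genuine main point is the remaining step of replacing $e^{-n\alpha-\delta_n}$ by $e^{-n\alpha}$.

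This last replacement is the main obstacle, because the difference $\sum_n\left(\frac{-3}{2n+1}\right)e^{-n\alpha}\big(1-e^{-\delta_n}\big)$ cannot be estimated term-by-term uniformly: when $\alpha$ is small the factor $e^{-\delta_n}$ is exactly the cut-off that makes the underlying theta-type sum converge, so dropping it is not a small perturbation. I would therefore split at a threshold $\alpha\asymp N^{-1/10}$. For $\alpha\gg N^{-1/10}$ the relevant range is $n\ll\alpha^{-1}\log N$, there $1-e^{-\delta_n}\le\delta_n$ is genuinely small, and the crude estimate $\ll N^{-1/2}\sum_n n^2 e^{-n\alpha}\ll N^{-1/2}\alpha^{-3}$ provides the required saving while $F(\alpha)$ is kept exactly. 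For $\alpha\ll N^{-1/10}$ I would instead evaluate $\sum_n\left(\frac{-3}{2n+1}\right)e^{-n\alpha-\delta_n}$ directly: written as a difference of the two theta-type sums over $n\equiv0$ and $n\equiv2\pmod3$, Poisson summation (equivalently Euler--Maclaurin), valid because $\delta_n\asymp N^{-1/2}n^2\rrw0$, shows that this sum equals $F(0)=\tfrac23$ up to $O(N^{-1/4})$, while simultaneously $F(\alpha)=\tfrac23+O(\alpha)=\tfrac23+O(N^{-1/10})$; hence the two agree within the admissible error. Balancing the two regimes produces the uniform error term of \eqref{eqm0}.

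Finally, \eqref{nnn} follows by specialization. Putting $N'=N-\tfrac{|m|(|m|+1)}{2}$ in \eqref{eqm0} and using $m=o(N^{3/8})$, so that $\alpha=\pi|m|/\sqrt{6N'}=o(N^{-1/8})\rrw0$ and $F(\alpha)=\tfrac23\big(1+O(\alpha)\big)$, gives $V_d(m,N)=\tfrac23\,p(N')(1+o(1))$. Inserting the Hardy--Ramanujan asymptotic $p(N')\sim\tfrac{1}{4\sqrt3\,N'}e^{C\sqrt{N'}}$ together with \eqref{asy1} for $V_d(N)$, and expanding $\sqrt{N'}-\sqrt N=-\tfrac{m^2}{4\sqrt N}+O(m^4/N^{3/2})$ so that $e^{C(\sqrt{N'}-\sqrt N)}=e^{-\pi m^2/\sqrt{24N}}(1+o(1))$, the ratio $V_d(m,N)/V_d(N)$ becomes $\tfrac23\cdot\tfrac{1}{4\sqrt3}\cdot 2^{1/4}3^{5/4}\,N^{-1/4}e^{-\pi m^2/\sqrt{24N}}(1+o(1))$. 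Since $\tfrac23\cdot\tfrac{1}{4\sqrt3}\cdot2^{1/4}3^{5/4}=2^{-3/4}3^{-1/4}=(24)^{-1/4}$, this is exactly \eqref{nnn}; this last step is routine bookkeeping.
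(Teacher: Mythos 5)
Your proposal is correct, and its skeleton is the same as the paper's: start from Proposition \ref{pr1}, approximate $p(N-D_n)/p(N)$ via Hardy--Ramanujan, split according to the size of $\ell$, evaluate a geometric-type series in the large-$\ell$ regime and a Gaussian/theta-type sum by integral comparison in the small-$\ell$ regime, then specialize with \eqref{asy1} to get \eqref{nnn}. The organization, however, differs in ways worth noting. You state the identity $F(\alpha)=\sum_{n\ge0}\left(\frac{-3}{2n+1}\right)e^{-n\alpha}$ up front, so the whole theorem reduces to one uniform replacement of $e^{-n\alpha-\delta_n}$ by $e^{-n\alpha}$; the paper never writes this identity, instead pairing the residue classes $n\equiv 0,2 \bmod 3$ into differences $p(N-Q_1)-p(N-Q_2)$ and recovering $F$ twice: as $\bigl(1-e^{-B\ell/\sqrt N}\bigr)/\bigl(1-e^{-3B\ell/(2\sqrt N)}\bigr)$ when $\ell\ge N^{3/8}$, and as the value $F(0)=2/3$ via Lemma \ref{lem4} when $\ell\le N^{3/8}$. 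Your Euler--Maclaurin/telescoping treatment of the signed sum $\sum_k\bigl(g(3k)-g(3k+2)\bigr)$ with $g(x)=e^{-\alpha x-cx(x+1)}$ is the exact counterpart of Lemma \ref{lem4}, whose proof is likewise Abel summation against a Stieltjes integral after the paper first converts the paired difference into the derivative weight $B(4n+\ell)/\sqrt N$; your version keeps the sign structure intact and gives $2/3+O(\alpha+N^{-1/4})$ in one step. Your cut at $\alpha\asymp N^{-1/10}$ (i.e. $\ell\asymp N^{2/5}$), with a single tail truncation at $D_n>N/2$, replaces the paper's two cuts at $\sqrt N(\log N)^2$ and $N^{3/8}$, and both balancings land on the same uniform error $O(N^{-1/10})$; your $\rho_n$-bookkeeping via $\min(\alpha^{-3},N^{3/4})$-type bounds does check out uniformly in $\ell$. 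Two harmless slips: your intermediate claim that the signed theta sum equals $F(0)$ up to $O(N^{-1/4})$ should read $O(\alpha+N^{-1/4})$ (immaterial, since you then compare with $F(\alpha)=2/3+O(\alpha)$ and $\alpha\ll N^{-1/10}$), and in the large-$\alpha$ regime the restriction $n\ll\alpha^{-1}\log N$ is unnecessary, since $1-e^{-\delta_n}\le\delta_n$ for all $n$ gives $N^{-1/2}\sum_n n^2e^{-n\alpha}\ll N^{-1/2}\alpha^{-3}\ll N^{-1/5}$ outright. The final constant computation $\frac23\cdot\frac{1}{4\sqrt3}\cdot 2^{1/4}3^{5/4}=24^{-1/4}$ agrees with the paper's derivation of \eqref{nnn}.
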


We are going to statement the last one of results of this paper. Let the real function $\Psi_d(x)$ be defined as:
\begin{equation}
\Psi_d(x)=\lim_{N\rrw +\infty}\frac{1}{V_d(N)}\#\left\{\lambda \in \mathcal{V}_d(N): \frac{{\rm rk}(\lambda)}{(6N/\pi^2)^{1/4}}\le x\right\},
\end{equation}
for $x\in\rb$, which gives a limiting distribution of the rank statistic for
strongly concave composition. It is clear that
\begin{align*}
\Psi_d(x)=\lim_{N\rrw +\infty}\frac{1}{V_d(N)}\sum_{\substack{m\in\zb\\ m\le ({6N}/{\pi^2})^{1/4}x}}\sum_{\substack{\lambda \in \mathcal{V}_d(N)\\ {\rm rk}(\lambda)=m}}1
=\lim_{N\rrw +\infty}\sum_{\substack{m\in\zb\\ m\le ({6N}/{\pi^2})^{1/4}x}}\frac{V_d(m,N)}{V_d(N)},
\end{align*}
$$\Psi_d(-\infty)=0 \;\mbox{and}\; \Psi_d(+\infty)=1.$$
Hence by using \eqref{nnn} and the fact that $V_d(m,N)=V_d(|m|,N)$, it is easy to deduce the following corollary by Abel's summation formula.
\begin{corollary}\label{cor1} We have for any fixed $x\in\rb$, $\Psi_d(x)$ is the distribution function of the standard normal distribution, that is,
$$\Psi_d(x)=\frac{1}{\sqrt{2\pi}}\int_{-\infty}^{x}e^{-\frac{x^2}{2}}\,dx.$$
\end{corollary}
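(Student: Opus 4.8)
The plan is to read \eqref{nnn} as a local limit theorem and then recognize the defining sum of $\Psi_d(x)$ as a Riemann sum for the Gaussian integral. Write $\sigma_N=(6N/\pi^2)^{1/4}$ and let $\varphi(t)=\frac{1}{\sqrt{2\pi}}e^{-t^2/2}$ be the standard normal density. A one‑line computation gives $(24N)^{1/4}=\sqrt{2\pi}\,\sigma_N$ and $\pi/\sqrt{24N}=1/(2\sigma_N^2)$, so \eqref{nnn} rewrites as
$$\frac{V_d(m,N)}{V_d(N)}=\frac{1}{\sigma_N}\varphi\!\left(\frac{m}{\sigma_N}\right)(1+o(1)),\qquad m=o\!\left(N^{3/8}\right).$$
Since the sum defining $\Psi_d(x)$ runs over $m\le\sigma_N x$ with $\sigma_N\asymp N^{1/4}$, the goal becomes showing that $\frac{1}{\sigma_N}\sum_{m\le\sigma_N x}\varphi(m/\sigma_N)$, a Riemann sum of mesh $1/\sigma_N\rrw 0$, converges to $\int_{-\infty}^x\varphi(t)\,dt$.

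First I would fix a large parameter $T>0$ and split $\Psi_{d,N}(x):=\sum_{m\le\sigma_N x}V_d(m,N)/V_d(N)$ into a central range $-T\sigma_N<m\le\sigma_N x$ and a tail $m\le -T\sigma_N$. On the central range every index satisfies $|m|\le\max(T,|x|)\sigma_N=O_{T,x}(N^{1/4})=o(N^{3/8})$, so the uniform asymptotic above applies term by term; pulling out the uniform factor $(1+o(1))$ and applying Abel's summation formula to compare the remaining sum with its integral yields
$$\sum_{-T\sigma_N<m\le\sigma_N x}\frac{V_d(m,N)}{V_d(N)}\ \rrw\ \int_{-T}^{x}\varphi(t)\,dt\qquad(N\rrw+\infty),$$
the smoothness and rapid decay of $\varphi$ making the sum‑to‑integral error $O(1/\sigma_N)$, which is harmless.

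The main obstacle is the tail $m\le -T\sigma_N$, whose indices reach down to the minimal rank $\asymp-\sqrt{2N}$ and hence leave the range $m=o(N^{3/8})$ where \eqref{nnn} holds. By the symmetry $V_d(m,N)=V_d(|m|,N)$ it equals $\sum_{m\ge T\sigma_N}V_d(m,N)/V_d(N)$, which I would control using the fully uniform Theorem~\ref{th1}. Writing $M=N-m(m+1)/2$ and combining $0<F\le1$ with the Hardy--Ramanujan bound $p(M)\ll e^{2\pi\sqrt{M/6}}$ and the leading term of \eqref{asy1} for $V_d(N)$ gives, for all $0\le m\le\sqrt{2N}$,
$$\frac{V_d(m,N)}{V_d(N)}\ll N^{3/4}e^{2\pi(\sqrt{M/6}-\sqrt{N/6})}\ll N^{3/4}e^{-m^2/(2\sigma_N^2)},$$
using $\sqrt{M/6}-\sqrt{N/6}\le -m(m+1)/(4\sqrt{6N})$. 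On the moderate range $T\sigma_N\le m\le N^{2/5}$ one has $M=N(1+o(1))$, the prefactor improves to $\ll\sigma_N^{-1}$, and the sum is a Gaussian tail $\ll\sigma_N^{-1}\sum_{m\ge T\sigma_N}e^{-m^2/(2\sigma_N^2)}\ll e^{-T^2/2}$ uniformly in $N$; on the far range $m>N^{2/5}$ one has $m^2/\sigma_N^2\gg N^{3/10}$, so the displayed bound is super‑exponentially small and the whole range contributes $\ll N^{5/4}e^{-N^{3/10}}\rrw 0$. Hence $\limsup_N\sum_{m\ge T\sigma_N}V_d(m,N)/V_d(N)\ll e^{-T^2/2}$.

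Combining the two ranges gives, for every fixed $T$,
$$\int_{-T}^{x}\varphi(t)\,dt\ \le\ \liminf_{N\rrw+\infty}\Psi_{d,N}(x)\ \le\ \limsup_{N\rrw+\infty}\Psi_{d,N}(x)\ \le\ \int_{-T}^{x}\varphi(t)\,dt+O\!\left(e^{-T^2/2}\right),$$
and letting $T\rrw+\infty$ forces $\Psi_d(x)=\int_{-\infty}^{x}\varphi(t)\,dt$, as claimed. The two delicate points are the \emph{uniformity} of the $o(1)$ in the central Riemann sum (needed so the relative error survives summation over $\asymp\sigma_N$ terms, which is why I would feed off the uniform Theorem~\ref{th1} rather than the pointwise \eqref{nnn}) and the degradation of the polynomial prefactor in Theorem~\ref{th1} when $M$ is small; the latter is absorbed by the exponential gain once $|m|\gg\sigma_N$.
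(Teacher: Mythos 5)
Your proposal is correct, and its core step --- rewriting \eqref{nnn} as the local limit statement $V_d(m,N)/V_d(N)=\sigma_N^{-1}\varphi(m/\sigma_N)(1+o(1))$ with $\sigma_N=(6N/\pi^2)^{1/4}$, $\varphi(t)=\frac{1}{\sqrt{2\pi}}e^{-t^2/2}$ (the identities $(24N)^{1/4}=\sqrt{2\pi}\,\sigma_N$ and $\pi/\sqrt{24N}=1/(2\sigma_N^2)$ check out), then comparing the central sum with the Gaussian integral via Abel summation --- is exactly the deduction the paper gestures at, since its entire proof is the sentence ``by using \eqref{nnn} and the fact that $V_d(m,N)=V_d(|m|,N)$, it is easy to deduce the corollary by Abel's summation formula.'' Where you genuinely diverge is the tail $|m|>T\sigma_N$: you bound it explicitly through Theorem \ref{th1}, the Hardy--Ramanujan bound of Lemma \ref{lem1}, and \eqref{asy1}, getting $\ll e^{-T^2/2}$ plus a super-exponentially small far range. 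There is a softer route, presumably the one intended alongside the paper's observation that $\Psi_d(+\infty)=1$: since $\sum_{m\in\zb}V_d(m,N)=V_d(N)$, the total mass is exactly $1$, so once the central range $|m|\le T\sigma_N$ is shown to carry mass $\int_{-T}^{T}\varphi(t)\,dt+o(1)$, the two tails jointly carry at most $1-\int_{-T}^{T}\varphi(t)\,dt+o(1)\ll e^{-T^2/2}$, and by the symmetry $V_d(m,N)=V_d(-m,N)$ the left tail alone is controlled --- no estimate on $V_d(m,N)$ for large $m$ is needed at all. Your explicit bound costs more work but buys a quantitative rate and would survive in settings without exact mass normalization. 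One small inaccuracy to repair: Theorem \ref{th1} is an asymptotic as its second argument $M=N-m(m+1)/2\rrw+\infty$, so invoking it ``for all $0\le m\le\sqrt{2N}$'' is not literally licensed when $M$ stays bounded; in that regime use instead the trivial bound $V_d(m,N)\le (1+C\sqrt{M})\,p(M)$ read off from Proposition \ref{pr1} (the sum \eqref{eq1} has $O(\sqrt{M})$ nonzero terms, each at most $p(M)$). This gives the same displayed estimate with an $O(N^{5/4})$ prefactor, so the fix is cosmetic and the conclusion stands.
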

\begin{remark}
The Corollary \ref{cor1} is suggested by the referee.
\end{remark}
\section{Proofs of results}
\subsection{The proof of Proposition \ref{pr1}}~

By the Jacobi triple product formula
$$(q;q)_{\infty}(-xq;q)_{\infty}(-x^{-1};q)_{\infty}=\sum_{n\in\zb}q^{\frac{n(n+1)}{2}}x^n$$
and the basic properties of Kronecker symbol we obtain that
\begin{align*}
\sum_{\substack{n\ge 0\\ m\in\zb}}V_d(m,n)x^mq^n
=&-\sum_{n\ge 0}(-1)^nq^{\frac{n(n+1)}{2}}x^{2n+1}\\
&+\frac{1}{(q;q)_{\infty}}\sum_{\ell\in\zb}q^{\frac{\ell(\ell+1)}{2}}x^{-\ell}\sum_{n\ge 0}\left(\frac{-
12}{2n+1}\right)x^{n}q^{\frac{n(n+1)}{6}}.
\end{align*}
by using \eqref{eq0}. This yields for integer $r\ge 0$,
\begin{align*}
\sum_{n\ge 0}V_d(-r,n)q^n&=\frac{1}{(q;q)_{\infty}}\sum_{\substack{\ell-n=r\\ \ell\in\zb, n\ge 0}}\left(\frac{-3}{2n+1}\right)\left(\frac{2}{2n+1}\right)^2q^{\frac{n(n+1)}{6}+\frac{\ell(\ell+1)}{2}}\\
&=\sum_{k\ge 0}p(k)q^{k}\sum_{ n\ge 0}\left(\frac{-3}{2n+1}\right)q^{\frac{n(n+1)}{6}+\frac{(n+r)(n+r+1)}{2}}\\
&=\sum_{N\ge 0}q^N\sum_{n\ge 0}\left(\frac{-3}{2n+1}\right)p\left(N-\frac{2n(n+1)}{3}-rn-\frac{r(r+1)}{2}\right).
\end{align*}
Which means that
$$V_d\left(-\ell,N+\frac{\ell(\ell+1)}{2}\right)=\sum_{n\ge 0}\left(\frac{-3}{2n+1}\right)p\left(N-\frac{2n(n+1)}{3}-n\ell\right)$$
for all integer $\ell\ge 0$. Thus by the fact that $V_d(-m,n)=V_d(m,n)$ we get the proof of \eqref{eq1} in Proposition \ref{pr1}. Further, if $2\ell+4>N$ then
$$V_d\left(\ell,N+\frac{|\ell|(|\ell|+1)}{2}\right)=p(N).$$
Namely we get the proof of \eqref{eq11} in Proposition \ref{pr1}.

\subsection{Hardy--Ramanujan for $p(n)$ and its applications}~

We need the following Hardy--Ramanujan asymptotic result for $p(n)$, which can be found in \cite{MR1575586}.
\begin{lemma}\label{lem1}We have for $n\in\zb_+$,
\begin{equation*}
p(n)-\hat{p}(n-1/24)= O\left(n^{-1}e^{B\sqrt{n}/2}\right),
\end{equation*}
where $B=2\pi/\sqrt{6}$ and
\[\hat{p}(x)=\frac{e^{B\sqrt{x}}}{4\sqrt{3}x}\left(1-\frac{1}{B\sqrt{x}}\right)\]
(throughout, $B$ and $\hat{p}(x)$ be defined as above).
\end{lemma}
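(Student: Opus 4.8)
The plan is to derive the two-term asymptotic from the Hardy--Ramanujan circle method applied to the generating function $P(q)=\sum_{n\ge 0}p(n)q^n=1/(q;q)_{\infty}$; equivalently, and more transparently for the error term, one may read it off from Rademacher's convergent series, and I will indicate where the exact series shortcuts the analysis. First I would write $p(n)=\frac{1}{2\pi\ri}\oint P(q)q^{-n-1}\,dq$ over a circle $|q|=e^{-2\pi t_0}$ just inside the unit circle and perform a Farey (or Ford-circle) dissection of the contour into arcs surrounding the roots of unity $e^{2\pi\ri h/k}$. The modular transformation of the Dedekind $\eta$-function, $\eta(-1/\tau)=\sqrt{-\ri\tau}\,\eta(\tau)$ with $q=e^{2\pi\ri\tau}$, controls the size of $P(q)$ near each $e^{2\pi\ri h/k}$: the exponential growth is governed by $\exp(B\sqrt{n}/k)$, so the arc at $q=1$ (the case $k=1$) dominates and the arcs with $k\ge 2$ feed into the error.

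On the major arc at $q=1$ I would substitute the transformed expression $1/(q;q)_{\infty}=\sqrt{t}\,e^{\pi/(12t)}\big(1+O(e^{-c/t})\big)$ (valid as $t\rrw 0^+$, with $q=e^{-2\pi t}$) and evaluate the resulting integral by the saddle-point/Laplace method. Making the exponent $\frac{\pi}{12t}+2\pi(n-\frac{1}{24})t$ stationary gives the saddle $t_0=1/\sqrt{24(n-1/24)}$ and the value $B\sqrt{n-1/24}$; carrying the Laplace expansion to second order, together with the shift $n\mapsto n-\frac{1}{24}$ coming from the $q^{1/24}$ in $\eta$, reproduces exactly the factor $\frac{e^{B\sqrt{x}}}{4\sqrt{3}\,x}\big(1-\frac{1}{B\sqrt{x}}\big)=\hat{p}(x)$ with $x=n-1/24$. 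The cleanest way to see that no further polynomial corrections survive in the main term is to note that the $k=1$ term of Rademacher's exact formula is the elementary Bessel value $I_{3/2}$, and $I_{3/2}(B\sqrt{x})=\frac{e^{B\sqrt{x}}}{\sqrt{2\pi B\sqrt{x}}}\big(1-\frac{1}{B\sqrt{x}}\big)+O(e^{-B\sqrt{x}})$ holds with the factor $\big(1-\frac{1}{B\sqrt{x}}\big)$ being exact.

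The remaining task is to bound the contribution of all arcs with $k\ge 2$. Using the trivial estimate $|A_k(n)|\le k$ for the Kloosterman-type sums together with standard bounds on $I_{3/2}$, the $k$-th term is $\ll x^{-1}e^{B\sqrt{x}/k}$ up to factors polynomial in $k$; the resulting series over $k\ge 2$ is dominated by $k=2$ and sums to $O\big(n^{-1}e^{B\sqrt{n}/2}\big)$, while the subdominant $e^{-B\sqrt{x}}$ part of the $k=1$ term is negligible. Collecting these estimates yields $p(n)-\hat{p}(n-1/24)=O\big(n^{-1}e^{B\sqrt{n}/2}\big)$, as claimed. The main obstacle is the uniform control of the minor arcs, equivalently the tail of Rademacher's series: one must ensure that the accumulated error from all $k\ge 2$ is genuinely of the stated size $O(n^{-1}e^{B\sqrt{n}/2})$ and is not inflated by the number of arcs or by the Bessel prefactors, which is precisely the point where the exact-formula route is most economical.
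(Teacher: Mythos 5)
Your proposal is correct and is essentially the paper's own approach: the paper offers no independent proof of this lemma but simply quotes it from Hardy--Ramanujan \cite{MR1575586}, and your sketch reconstructs exactly that classical circle-method argument (equivalently the $k=1$ term of Rademacher's series, where your identity $I_{3/2}(z)=\frac{e^{z}}{\sqrt{2\pi z}}\left(1-\frac{1}{z}\right)+O(e^{-z})$ and the constant $\frac{1}{4\sqrt{3}}$ do check out, recovering $\hat{p}(n-1/24)$ with the $k\ge 2$ arcs contributing $O\left(n^{-1}e^{B\sqrt{n}/2}\right)$). No gaps worth noting beyond the cosmetic point that your stated transformation $1/(q;q)_{\infty}=\sqrt{t}\,e^{\pi/(12t)}(1+O(e^{-c/t}))$ omits the factor $e^{-\pi t/12}$, which you in any case reinstate correctly via the shift $n\mapsto n-\frac{1}{24}$ in the saddle-point exponent.
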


We also need the following approximation for $p(X+r)$ with $r=o(X^{3/4})$.
\begin{lemma}\label{lem2}
Let $r=o(X^{3/4})$ and $X$ be sufficiently large, we have
$$\frac{p(X+r)}{p(X)}=e^{\frac{Br}{2\sqrt{X}}}\left(1+O\left(\frac{1}{X}+\frac{|r|}{X}+\frac{|r|^2}{X^{3/2}}\right)\right).$$
\end{lemma}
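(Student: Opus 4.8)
The plan is to derive the estimate directly from the Hardy--Ramanujan asymptotic of Lemma~\ref{lem1}, first replacing $p$ by its smooth main term $\hat p$ and then Taylor expanding. I would set $Y = X - \tfrac1{24}$, so that $X + r - \tfrac1{24} = Y + r$ and $X - \tfrac1{24} = Y$. Since $r = o(X^{3/4}) = o(X)$, we have $X + r = X(1 + o(1))$ and hence $\sqrt{X+r} = (1+o(1))\sqrt X$; this makes the error in Lemma~\ref{lem1} \emph{exponentially} smaller than the main term, because the relative size of that error is $\asymp e^{B\sqrt{X+r}/2 - B\sqrt{X+r}} = O(e^{-c\sqrt X})$ for an absolute $c>0$. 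Thus I would record
$$p(X+r) = \hat p(Y+r)\bigl(1 + O(e^{-c\sqrt X})\bigr), \qquad p(X) = \hat p(Y)\bigl(1 + O(e^{-c\sqrt X})\bigr),$$
and reduce the whole problem to analysing the ratio $\hat p(Y+r)/\hat p(Y)$, the factor $e^{-c\sqrt X}$ being negligible against every term in the claimed error.

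Next I would split, using the definition of $\hat p$,
$$\frac{\hat p(Y+r)}{\hat p(Y)} = e^{B(\sqrt{Y+r}-\sqrt Y)}\cdot\frac{Y}{Y+r}\cdot\frac{1 - (B\sqrt{Y+r})^{-1}}{1 - (B\sqrt Y)^{-1}},$$
and treat the three factors separately. For the exponential factor I expand
$$\sqrt{Y+r}-\sqrt Y = \frac{r}{2\sqrt Y} - \frac{r^2}{8 Y^{3/2}} + O\!\left(\frac{|r|^3}{Y^{5/2}}\right),$$
and then convert $\tfrac{1}{\sqrt Y}$ to $\tfrac1{\sqrt X}$ via $Y = X - \tfrac1{24}$, which introduces only a further correction of size $O(|r|/X^{3/2})$. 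This yields
$$B(\sqrt{Y+r}-\sqrt Y) = \frac{Br}{2\sqrt X} + O\!\left(\frac{|r|}{X^{3/2}} + \frac{r^2}{X^{3/2}} + \frac{|r|^3}{X^{5/2}}\right).$$

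The decisive point, and the only place the hypothesis $r = o(X^{3/4})$ is genuinely needed, is that each correction term above tends to $0$: one has $r^2/X^{3/2} = o(1)$ \emph{precisely} when $r = o(X^{3/4})$, while $|r|^3/X^{5/2} = o(X^{-1/4})$ and $|r|/X^{3/2} = o(X^{-3/4})$ are smaller still. Since the combined error is $o(1)$, it may be exponentiated, so that $e^{B(\sqrt{Y+r}-\sqrt Y)} = e^{Br/(2\sqrt X)}\bigl(1 + O(\cdot)\bigr)$ with the same error. I would then use $|r| = o(X)$ to absorb the smaller contributions (so that $|r|^3/X^{5/2} \ll r^2/X^{3/2}$ and $|r|/X^{3/2} \ll |r|/X$), and expand the two remaining rational factors as $\tfrac{Y}{Y+r} = 1 + O(|r|/X)$ and $\tfrac{1 - (B\sqrt{Y+r})^{-1}}{1 - (B\sqrt Y)^{-1}} = 1 + O(X^{-1} + |r|/X)$, at which point everything collapses into the stated error $O\bigl(X^{-1} + |r|X^{-1} + r^2 X^{-3/2}\bigr)$. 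The main obstacle is not analytic but bookkeeping: tracking which correction terms dominate under $r = o(X^{3/4})$ and verifying that the second-order exponential term $r^2/X^{3/2}$ is $o(1)$, so that it survives as a multiplicative $(1+o(1))$ factor rather than blowing up.
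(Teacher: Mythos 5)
Your proof is correct and takes essentially the same route as the paper: replace $p$ by the Hardy--Ramanujan main term $\hat p$ via Lemma~\ref{lem1} (whose error is exponentially small relative to $\hat p$) and then Taylor-expand the ratio of $\hat p$ values, with the hypothesis $r=o(X^{3/4})$ entering exactly where you say, to keep the second-order exponent correction $r^2/X^{3/2}$ of size $o(1)$ so it can be pulled out as a multiplicative error. If anything you are slightly more careful than the paper, which ignores the $1/24$ shift and asserts $p(N)/\hat p(N)=1+O(1/N)$ (as stated this is really $1+O(N^{-1/2})$, though the shift cancels in the ratio $p(X+r)/p(X)$); your substitution $Y=X-\tfrac{1}{24}$ handles this point cleanly.
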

\begin{proof}
From Lemma \ref{lem1}, it is clear that
\begin{align*}
\frac{\hat{p}(X+r)}{\hat{p}(X)}&=e^{B\left(\sqrt{X+r}-\sqrt{X}\right)}\left(1+O\left(\frac{|r|}{X}\right)\right)\\
&=e^{\frac{Br}{2\sqrt{X}}+O(r^2/X^{3/2})}\left(1+O\left(\frac{|r|}{X}\right)\right)\\
&=e^{\frac{Br}{2\sqrt{X}}}\left(1+O\left(\frac{|r|}{X}+\frac{|r|^2}{X^{3/2}}\right)\right)
\end{align*}
by the generalized binomial theorem.
Note that
$$
\frac{p(N)}{\hat{p}(N)}=1+O\left(\frac{1}{N}\right)
$$
holds for all $N\ge 1$  we obtain that
$$\frac{p(X+r)}{p(X)}=e^{\frac{Br}{2\sqrt{X}}}\left(1+O\left(\frac{1}{X}+\frac{|r|}{X}+\frac{|r|^2}{X^{3/2}}\right)\right).$$
Which completes the proof of the lemma.
\end{proof}
\subsection{The proof of Theorem \ref{th1}}
\subsubsection{Case  $|\ell| >\sqrt{N}(\log N)^2$.}
We first denote
\begin{equation}\label{eq20}
F(\ell,N):=V_d\left(\ell,N+\frac{|\ell|(|\ell|+1)}{2}\right).
\end{equation}
For $N/2 \ge |\ell| >\sqrt{N}(\log N)^2 $, from Proposition \ref{pr1} and Lemma \ref{lem2} we obtain that
\begin{align*}
F(\ell,N)&=\sum_{\substack{n\ge 0\\ 2n(n+1)/{3}+n|\ell|\le N}}\left(\frac{-3}{2n+1}\right)p\left(N-\frac{2n(n+1)}{3}-n|\ell|\right)\\
&=p(N)+O\left(\sum_{\substack{n\ge 2\\ 2n(n+1)/{3}+n\ell\le N}}p\left(N-n|\ell|\right)\right)\\
&=p(N)+O\left(\sqrt{N}p(N-2|\ell| )\right)=p(N)+O\left(\sqrt{N}p(N- \lfloor \sqrt{N}(\log N)^2\rfloor)\right)\\
&=p(N)\left(1+O\left(\sqrt{N}\exp\left(-\frac{B\lfloor \sqrt{N}(\log N)^2\rfloor}{2\sqrt{N}}\right)\right)\right),
\end{align*}
where $\lfloor\cdot\rfloor$ is the greatest integer function. Hence, we have if $N/2 \ge |\ell| >\sqrt{N}(\log N)^2$ then
\begin{equation}\label{eq21}
F(\ell, N)=p(N)\left(1+O\left(N^{-\sqrt{\log N}}\right)\right).
\end{equation}

\subsubsection{Case  $|\ell| \le\sqrt{N}(\log N)^2$.}
If $0\le \ell\le \sqrt{N}(\log N)^2$ since
$$\left(\frac{-3}{2n+1}\right)=\begin{cases} ~~1 \quad &if~ n\equiv 0\bmod 3,\\
~~0 &if~ n\equiv 1\bmod 3,\\
-1 &if~ n\equiv 2\bmod 3,
\end{cases}$$
we have
\begin{align*}
F(\ell,N)=&\sum_{n\ge 0}\left(\frac{-3}{2n+1}\right)p\left(N-\frac{2n(n+1)}{3}-n\ell\right)\\
=&\sum_{n\ge 0}\left[p\left(N-Q_1(n,\ell)\right)-p\left(N-Q_2(n,\ell)\right)\right],
\end{align*}
where
$$Q_1(n,\ell)=2n(3n+1)+3n\ell\;\mbox{and}\; Q_2(n,\ell)=Q_1(n,\ell)+(8n+4+2\ell).$$
We split that
\begin{align*}
\frac{F(\ell,N)}{p(N)}=
&\frac{1}{p(N)}\sum_{\substack{n\ge 0\\n^2+n\ell > \sqrt{N}(\log N)^2}}\left[p\left(N-Q_1(n,\ell)\right)-p\left(N-Q_2(n,\ell)\right)\right]\\
&+\frac{1}{p(N)}\sum_{\substack{n\ge 0\\ n^2+n\ell \le \sqrt{N}(\log N)^2}}\left[p\left(N-Q_1(n,\ell)\right)-p\left(N-Q_2(n,\ell)\right)\right]
=:R+I.
\end{align*}
Noting that $Q_2(n,\ell)\ge Q_1(n,\ell)\ge n^2+n\ell$ for all $n\ge 0$ we estimate that
\begin{align*}
|R|&\le  \frac{2}{p(N)}\sum_{\substack{n\ge 0\\n^2+n\ell > \sqrt{N}(\log N)^2}}p\left(N-Q_1(n,\ell)\right)\\
&\le \frac{2}{p(N)}\sum_{\substack{n\ge 0\\ n^2+n\ell > \sqrt{N}(\log N)^2}}p\left(N-(n^2+n\ell)\right)\le 2\sqrt{N}\frac{p(N-\lfloor \sqrt{N}(\log N)^2\rfloor)}{p(N)}.
\end{align*}
Thus Lemma \ref{lem2} implies the following estimate for $R$,
\begin{equation*}
R\ll \sqrt{N}e^{-\frac{B\lfloor \sqrt{N}(\log N)^2\rfloor}{2\sqrt{N}}}\ll N^{-\sqrt{\log N}}.
\end{equation*}

To estimate $I$, we note that
$$0\le Q_1(n,\ell)\le Q_2(n,\ell)\le 16(n^2+n\ell)+2\ell+4=O\left(\sqrt{N}(\log N)^2\right),$$
if $n\ge 0$ and $n^2+n\ell\le \sqrt{N}(\log N)^2$. Then by Lemma \ref{lem2} we have
\begin{align*}
I=&\sum_{n\ge 0}\left(e^{-\frac{BQ_1(n,\ell)}{2\sqrt{N}}}-e^{-\frac{BQ_2(n,\ell)}{2\sqrt{N}}}\right)-\sum_{\substack{n\ge 0\\ n^2+n\ell > \sqrt{N}(\log N)^2}}
\left(e^{-\frac{BQ_1(n,\ell)}{2\sqrt{N}}}-e^{-\frac{BQ_2(n,\ell)}{2\sqrt{N}}}\right)\\
&+O\left(\sum_{i=1}^2\sum_{\substack{n\ge 0\\ n^2+n\ell \le \sqrt{N}(\log N)^2}}e^{-\frac{BQ_i(n,\ell)}{2\sqrt{N}}}\left(\frac{1}{N}+\frac{Q_i(n,\ell)}{N}+
\frac{Q_i(n,\ell)^2}{N^{3/2}}\right)\right)=I_M+I_R
\end{align*}
with
\begin{equation*}
I_M=\sum_{n\ge 0}\left(e^{-\frac{BQ_1(n,\ell)}{2\sqrt{N}}}-e^{-\frac{BQ_2(n,\ell)}{2\sqrt{N}}}\right)
\end{equation*}
and
\begin{align*}
I_R\ll &\sum_{\substack{n\ge 0\\ n^2+n\ell > \sqrt{N}(\log N)^2}}e^{-\frac{B(n^2+\ell n)}{\sqrt{N}}}
+\sum_{\substack{n\ge 0\\ n^2+n\ell\le \sqrt{N}(\log N)^2}}\frac{(\log N)^{4}}{N^{1/2}}e^{-\frac{B(n^2+n\ell)}{2\sqrt{N}}}\\
\ll &N^{-\sqrt{\log N}}+N^{-1/2}(\log N)^4\sum_{\substack{n\ge 0\\ n^2+n\ell\le \sqrt{N}(\log N)^2}}1\ll N^{-1/4}(\log N)^5.
\end{align*}
From the above, we conclude above that
\begin{equation}\label{eq24}
{F(\ell,N)}/{p(N)}=I_M+O\left(N^{-1/5}\right)
\end{equation}
holds for $0\le \ell\le \sqrt{N}(\log N)^2$.

For the estimate of $I_M$ we need the following lemma.
\begin{lemma}\label{lem4}Let $0\le \ell= o(\alpha^{-1})$, then as $\alpha\rrw 0^+$,
$$f(\alpha):=\alpha\sum_{n\ge 0}(4n+\ell)e^{-2\alpha n^2-\alpha n\ell}=1+O\left(\sqrt{\alpha}+|\alpha\ell|\right).$$
\end{lemma}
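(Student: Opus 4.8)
The plan is to compare the sum defining $f(\alpha)$ with the corresponding integral, exploiting a convenient exact antiderivative. Write $g(t)=(4t+\ell)e^{-2\alpha t^2-\alpha \ell t}$, so that $f(\alpha)=\alpha\sum_{n\ge 0}g(n)$. The key structural observation is that $\frac{d}{dt}e^{-2\alpha t^2-\alpha \ell t}=-\alpha(4t+\ell)e^{-2\alpha t^2-\alpha \ell t}=-\alpha g(t)$, so $g$ is, up to the factor $-\alpha^{-1}$, an exact derivative. Consequently
$$\alpha\int_0^\infty g(t)\,dt=-\left[e^{-2\alpha t^2-\alpha \ell t}\right]_0^\infty=1,$$
which will be the main term; no Gaussian approximation is needed.

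First I would reduce the error to a total-variation estimate. Using $g(n)-\int_n^{n+1}g(t)\,dt=-\int_n^{n+1}g'(s)(n+1-s)\,ds$ and summing over $n\ge 0$ gives
$$\left|\sum_{n\ge 0}g(n)-\int_0^\infty g(t)\,dt\right|\le \int_0^\infty |g'(t)|\,dt,$$
so that $f(\alpha)-1=O\!\left(\alpha\int_0^\infty|g'(t)|\,dt\right)$. It therefore remains to bound the total variation of $g$ on $[0,\infty)$.

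Next I would analyze $g'$. A direct computation gives $g'(t)=\bigl(4-\alpha(4t+\ell)^2\bigr)e^{-2\alpha t^2-\alpha \ell t}$, which vanishes on $[0,\infty)$ only at the single point $s_*=\tfrac{1}{2\sqrt\alpha}-\tfrac{\ell}{4}$. The argument then splits according to the sign of $s_*$. If $\ell<2/\sqrt\alpha$ then $s_*>0$, $g$ increases on $[0,s_*]$ and decreases thereafter, so its total variation equals $2g(s_*)-g(0)\le 2g(s_*)$; since $4s_*+\ell=2/\sqrt\alpha$ one finds $g(s_*)=\tfrac{2}{\sqrt\alpha}e^{-1/2+\alpha\ell^2/8}\le \tfrac{2}{\sqrt\alpha}$ (using $\alpha\ell^2<4$), whence the error is $O(\sqrt\alpha)$. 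If instead $\ell\ge 2/\sqrt\alpha$ then $s_*\le 0$ and $g$ is decreasing on all of $[0,\infty)$, so its total variation is $g(0)=\ell$ and the error is $O(\alpha\ell)$. Combining the two cases yields $f(\alpha)=1+O(\sqrt\alpha+|\alpha\ell|)$, as claimed.

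The hardest part will be handling this estimate uniformly in $\ell$ across its whole range, and in particular recognizing the transition at $\ell\asymp\alpha^{-1/2}$: for small $\ell$ the error is controlled by the height $\asymp\alpha^{-1/2}$ of the interior maximum of $g$, producing the $\sqrt\alpha$ term, whereas for large $\ell$ the profile is monotone and the error is governed instead by the boundary value $g(0)=\ell$, producing the $\alpha\ell$ term. The hypothesis $\ell=o(\alpha^{-1})$ plays no role in the bound itself but guarantees $\alpha\ell=o(1)$, so that the stated error genuinely tends to $0$.
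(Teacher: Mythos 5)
Your proof is correct, and while it rests on the same pivotal observation as the paper's proof, the error analysis is genuinely different. The shared kernel is that $g(t)=(4t+\ell)e^{-2\alpha t^2-\alpha\ell t}$ is $-\alpha^{-1}$ times an exact derivative, so $\alpha\int_0^\infty g(t)\,dt=1$ exactly; in the paper this same fact surfaces, after completing the square and substituting, as $e^{\alpha\ell^2/8}\int_{\alpha\ell^2/8}^{\infty}e^{-x}\,dx=1$. For the error, the paper proceeds by Abel summation: it writes the sum as a Riemann--Stieltjes integral against $d\bigl(\sum_{0\le n\le x}(n+\ell/4)\bigr)$, replaces this step measure by $d\bigl(x^2/2+x\ell/4\bigr)$, and bounds the discrepancy by a second-moment Gaussian integral, which becomes $O\bigl(\sqrt{\alpha}\,e^{\alpha\ell^2/8}\int_{\alpha\ell^2/8}^{\infty}x^{1/2}e^{-x}\,dx\bigr)$; both error terms $\sqrt{\alpha}$ and $\alpha\ell$ then emerge from this single incomplete-gamma expression according to the size of $\alpha\ell^2$. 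You instead use the first-order Euler--Maclaurin remainder $\bigl|\sum_{n\ge 0}g(n)-\int_0^\infty g\bigr|\le\int_0^\infty|g'|$ and compute the total variation exactly via the critical point $s_*=\frac{1}{2\sqrt{\alpha}}-\frac{\ell}{4}$; your case split at $\ell\asymp\alpha^{-1/2}$ does not appear in the paper and has the merit of making the provenance of the two error terms transparent: $\sqrt{\alpha}$ comes from the peak height $g(s_*)\le 2/\sqrt{\alpha}$ of the unimodal profile, while $\alpha\ell$ comes from the boundary value $g(0)=\ell$ in the monotone regime. Your route is the more elementary and self-contained (no Stieltjes machinery, no incomplete gamma estimates), whereas the paper's is shorter on the page but glosses the step-function approximation that your total-variation bound handles cleanly; your closing remark is also accurate, since the bound holds for all $\ell\ge 0$ and the hypothesis $\ell=o(\alpha^{-1})$ only ensures it is nontrivial.
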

\begin{proof}By Abel's summation formula or integration by parts for a Riemann--Stieltjes integral we obtain that
\begin{align*}
f(\alpha)&=4\alpha\sum_{n\ge 0}(n+\ell/4)e^{-2\alpha(n+\ell/4)^2+\frac{\alpha\ell^2}{8}}\\
&=4\alpha e^{\frac{\alpha\ell^2}{8}}\int_{0-}^{\infty}e^{-2\alpha(x+\ell/4)^2}\,d\left(\sum_{0\le n\le x}(n+\ell/4)\right)\\
&=4\alpha e^{\frac{\alpha\ell^2}{8}}\left(\int_{0}^{\infty}e^{-2\alpha(x+\ell/4)^2}\,d \left(\frac{x^2}{2}+\frac{x\ell}{4}\right)
+O\left(\alpha\int_{0}^{\infty}(x+\ell/4)^2e^{-2\alpha(x+\ell/4)^2}\,d x\right)\right)\\
&=4\alpha e^{\frac{\alpha\ell^2}{8}}\left(\int_{\ell/4}^{\infty}xe^{-2\alpha x^2}\,dx+O\left(\alpha\int_{\ell/4}^{\infty}x^2e^{-2\alpha x^2}\,dx\right)\right)\\
&=e^{\frac{\alpha\ell^2}{8}}\int_{{\alpha\ell^2}/{8}}^{\infty}e^{-x}\,dx
+O\left(\sqrt{\alpha}e^{\frac{\alpha\ell^2}{8}}\int_{{\alpha\ell^2}/{8}}^{\infty}x^{1/2}e^{-x}\,dx\right)=1+O\left(\sqrt{\alpha}+|\alpha\ell|\right),
\end{align*}
which completes the proof of the lemma.
\end{proof}
We now evaluate $I_M$. By the definition of $F(\alpha)$ and $I_M$, it is clear that if $\ell\ge N^{3/8}$ then
\begin{align*}
I_M&=\sum_{0\le n\le N^{1/5}}\left(e^{-\frac{BQ_1(n,\ell)}{2\sqrt{N}}}-e^{-\frac{BQ_2(n,\ell)}{2\sqrt{N}}}\right)+O(N^{-\sqrt{\log N}})\\
&=\sum_{0\le n\le N^{1/5}}e^{-\frac{B(3n+1)n}{\sqrt{N}}}\left(1-e^{-\frac{B(\ell+4n)}{\sqrt{N}}}\right)e^{-\frac{3Bn\ell}{2\sqrt{N}}}+O(N^{-\sqrt{\log N}})\\
 &=\left(1+O(N^{-1/10})\right)\sum_{0\le n\le N^{1/5}}\left(1-e^{-\frac{B(\ell+4n)}{\sqrt{N}}}\right)e^{-\frac{3Bn\ell}{2\sqrt{N}}}+O(N^{-\sqrt{\log N}})\\
&= \left(1+O(N^{-1/10})\right)\frac{1-e^{-\frac{B\ell}{\sqrt{N}}}}{1-e^{-\frac{3B\ell}{2\sqrt{N}}}}=\left(1+O(N^{-1/10})\right)F\left(\frac{B\ell}{2\sqrt{N}}\right)
\end{align*}
and if $0\le \ell\le N^{3/8}$ then
\begin{align*}
I_M=&\sum_{ 0\le n\le N^{2/5}}\left(e^{-\frac{Bn}{\sqrt{N}}}-e^{-\frac{B(5n+\ell)}{\sqrt{N}}}\right)e^{-\frac{B(6n^2+3n\ell)}{2\sqrt{N}}}+O(N^{-\sqrt{\log N}})\\
=& \left(1+O(N^{-1/10})\right)\sum_{ 0\le n\le N^{2/5}}\frac{B(4n+\ell)}{\sqrt{N}}e^{-\frac{B(6n^2+3n\ell)}{2\sqrt{N}}}+O(N^{-\sqrt{\log N}})\\
=&\left(1+O(N^{-1/10})\right)\frac{B}{\sqrt{N}}\sum_{n\ge 0}(4n+\ell)e^{-\frac{B(6n^2+3n\ell)}{2\sqrt{N}}}+O(N^{-\sqrt{\log N}})\\
=&\frac{2}{3}\left(1+O(N^{-1/10})\right)\left(1+O(N^{-1/4}+\ell N^{-1/2})\right)= \left(1+O(N^{-1/10})\right)F\left(\frac{B\ell}{2\sqrt{N}}\right)
\end{align*}
by the use of Lemma \ref{lem4}. Thus it is clear that for $0\le \ell\le \sqrt{N}(\log N)^2$,
\begin{equation}\label{eq25}
F(\ell, N)=p(N)F\left(\frac{\pi\ell}{\sqrt{6N}}\right)\left(1+O(N^{-1/10})\right)
\end{equation}
by use \eqref{eq24} and $B=2\pi/\sqrt{6}$.

Finally,  by using \eqref{eq20}, \eqref{eq21}, \eqref{eq25} and the fact that $V_d(m,n)=V_d(|m|,n)$ we finish the proof of \eqref{eqm0}. By using \eqref{asy1}, \eqref{eqm0} and Lemma \ref{lem1} we obtain the proof of \eqref{nnn}, which completes the proof of Theorem \ref{th1}.
\section*{Acknowledgment}

The author would like to thank the referee for very helpful and detailed comments and suggestions.

\end{document}